\begin{document}

%%%%%%%%%%%%%%%%%%%%%%%%%%%%%%%%%%%%%%%%%%%%%%%%%%%%%%%%%%%%%%%%%%%%%%%%%%%%%%%%%%%%
%%%%%%%%%%%%%%%%%%%%%%%%%%%%%%%%%%%%%%%%%%%%%%%%%%%%%%%%%%%%%%%%%%%%%%%%%%%%%%%%%%%%
%%%%%%%%%%%%%%%%%%%%%%%%%%%%%%%%%%%%%%%%%%%%%%%%%%%%%%%%%%%%%%%%%%%%%% PRIVATE TEX MACROS
%%%%%%%%%%%%%%%%%%%%%%%%%%%%%%%%%%%%%%%%%%%%%%%%%%%%%%%%%%%%%%%%%%%%%%%%%%%%%%%%%%%%
%%%%%%%%%%%%%%%%%%%%%%%%%%%%%%%%%%%%%%%%%%%%%%%%%%%%%%%%%%%%%%%%%%%%%%%%%%%%%%%%%%%%

%%%%%%%%% `FONTS'

\let\goth\mathfrak

%%%%%%%%% ENVIRONMENTS
%%%%%%%%%%%%%%%%% `theorems'

\newcommand\theoremname{Theorem}
\newcommand\corollaryname{Corollary}
\newcommand\propositionname{Proposition}
\newcommand\factname{Fact}
\newcommand\remarkname{Remark}
\newcommand\notename{Note}
\newcommand\problemname{Problem}

\newtheorem{thm}{\theoremname}[section]
\newtheorem{cor}[thm]{\corollaryname}
\newtheorem{prop}[thm]{\propositionname}
\newtheorem{fact}[thm]{\factname}
\newtheorem{note}[thm]{\notename}
\newtheorem{prob}[thm]{\problemname}
\newtheorem{rema}[thm]{\remarkname}

\newtheorem{cnstrx}[thm]{Construction}
\newenvironment{constr}{\begin{cnstrx}\normalfont}{\myend\end{cnstrx}}
\def\myend{{}\hfill{\small$\bigcirc$}}

\newenvironment{ctext}{%
  \par
  \smallskip
  \centering
}{%
 \par
 \smallskip
 \csname @endpetrue\endcsname
}

%%%%%%%%%%%%%%%%% `lists'

\newcounter{sentence}
\def\thesentence{\roman{sentence}}
\def\labelsentence{\upshape(\thesentence)}

\newenvironment{sentences}{%
        \list{\labelsentence}
          {\usecounter{sentence}\def\makelabel##1{\hss\llap{##1}}
            \topsep3pt\leftmargin0pt\itemindent40pt\labelsep8pt}%
  }{%
    \endlist}

%%%%%%%%%%%%%%%%%%%%%% MATH SYMBOLS

\newcommand*{\sub}{\raise.5ex\hbox{\ensuremath{\wp}}}
\newcommand*{\struct}[1]{{\ensuremath{\langle #1 \rangle}}}
\def\id{\mathrm{id}}
\newcommand{\msub}{\mbox{\large$\goth y$}}    %%%{{\goth p}}   %%%{{\goth m}}
\def\suport{{\mathrm{supp}}}
\def\inc{\mathrel{\strut\rule{3pt}{0pt}\rule{1pt}{9pt}\rule{3pt}{0pt}\strut}}
\def\lines{{\cal L}}

\def\dual#1{{#1}^\circ}

\def\VerSpace(#1,#2){{\bf V}_{{#2}}({#1})}
\def\GrasSpace(#1,#2){{\bf G}_{{#2}}({#1})}
\def\VerSpacex(#1,#2){{\bf V}^\ast_{{#1}}({#2})}

\def\konftyp(#1,#2,#3,#4){\left( {#1}_{#2}\, {#3}_{#4} \right)}

\newcount\liczbaa
\newcount\liczbab
\def\binkonf(#1,#2){\liczbaa=#2 \liczbab=#2 \advance\liczbab by -2
\def\doa{\ifnum\liczbaa = 0\relax \else
\ifnum\liczbaa < 0 \the\liczbaa \else +\the\liczbaa\fi\fi}
\def\dob{\ifnum\liczbab = 0\relax \else
\ifnum\liczbab < 0 \the\liczbab \else +\the\liczbab\fi\fi}
\konftyp(\binom{#1\doa}{2},#1\dob,\binom{#1\doa}{3},3) }
\let\binconf\binkonf

\newcounter{zdanie}\setcounter{zdanie}{0}

\def\ginom(#1,#2){\binom{{#1}+{#2}}{{#1}}}
\def\ginomx(#1,#2){{\sf B}({#1},{#2})}
\def\ginconf(#1,#2){\konftyp({\binom{{#1}+{#2}-{1}}{#1}},{#1},{\binom{{#1}+{#2}-{1}}{#2}},{#2})}
\def\ginconfx(#1,#2){{\mathscr B}({#1},{#2})}
\let\ginkonf\ginconf
\let\ginkonfx\ginconfx

\def\vergras{{\goth R}}
\def\starof(#1){{\mathop{\mathrm S}(#1)}}
\def\topof(#1){{\mathop{\mathrm T}(#1)}}

%%%%%%%%%%%%%%%%%%%%%%%%%%%%%%%%%% PAPER'S DATA

\title[Pascal Triangles of Configurations]{%
Hyperplanes in Configurations, decompositions, and Pascal Triangle of Configurations}
\author{Krzysztof Pra{\.z}mowski}

%%%%%%%%%%%%%%%%%%%%%%%%%%%%%%%%%%%%%%%%%%%%%%%%%%%%%%%%%%%%%%%%%%%%%%%%%%%%%%%%%%%%
%%%%%%%%%%%%%%%%%%%%%%%%%%%%%%%%%%%%%%%%%%%%%%%%%%%%%%%%%%%%%%%%%%%%%%%%%%%%%%%%%%%%
%%%%%%%%%%%%%%%%%%%%%%%%%%%%%%%%%%%%%%%%%%%%%%%%%%%%%%%%%%%%%%% BEGINNING OF THE TEXT PROPER
%%%%%%%%%%%%%%%%%%%%%%%%%%%%%%%%%%%%%%%%%%%%%%%%%%%%%%%%%%%%%%%%%%%%%%%%%%%%%%%%%%%%
%%%%%%%%%%%%%%%%%%%%%%%%%%%%%%%%%%%%%%%%%%%%%%%%%%%%%%%%%%%%%%%%%%%%%%%%%%%%%%%%%%%%
%

\maketitle
\begin{abstract}
  An elegant procedure which characterizes a decomposition of some class of binomial configurations
  into two other, resembling a definition of Pascal's Triangle, was given in \cite{gevay}.
  In essence, this construction was already presented in \cite{perspect}. We show that
  such a procedure is a result of fixing in configurations in some class $\mathcal K$ suitable hyperplanes
  which both: are in this class, and deleting such a hyperplane results in a configuration
  in this class. By a way of example we show two more (added to that of \cite{gevay})
  natural classes of such configurations, discuss some other, and propose some open questions
  that seem also natural in this context.
\par\noindent
  Mathematics Subject Classification: 05B30, 51E30 (51E20)
\par\noindent
  Keywords: Pascal Triangle (of binomials), binomial, configuration, hyperplane, combinatorial
  Grassmannian, combinatorial Veronesian, Pascal Triangle of Configurations
\end{abstract}

%%%%%%%%%%%%%%%%%%%%%%%%%%%%%%%%%%%%%%%%%%%%%%%%%%%%%%%%%%%%%%%%%%%%%%%%%%%%%%%%%%%%
%%%%%%%%%%%%%%%%%%%%%%%%%%%%%%%%%%%%%%%%%%%%%%%%%%%%%%%%%%%%%%% intro
%%%%%%%%%%%%%%%%%%%%%%%%%%%%%%%%%%%%%%%%%%%%%%%%%%%%%%%%%%%%%%%%%%%%%%%%%%%%%%%%%%%%
\section*{Introduction}\label{sec:intro}

On one hand, ``Pascal Triangle'' is a term which is known to all mathematicians: it characterizes
an arrangement of binomial coefficients in a form of a `pyramid' such that each item is the sum
of items placed immediately above it. In another view: the sum of each neighbour items in a row equals to
the item which is their common neighbour (in the  row below).
Clearly, binomial coefficients are simply values of a two-argument function $b(n,k)$ defined on nonnegative integers
($n=0,1,\ldots$, $k=0,\ldots,n$)
and nothing `magic' is in the pyramid defined above. It is a visual presentation of recursive
equation which these coefficients satisfy. Clearly, the sequences of boundary values $b(n,0)$ and $b(n,n)$
uniquely determine then the function $b$. Nevertheless the recurrence in question is extremely simple...

Quite recently, Gabor G{\'e}vay in \cite{gevay} noted that there is family of point-line configurations
which can be arranged in such a pyramid, with a suitably defined ``sum'' of the configurations in question.
Or: each (nontrivial, non-boundary) configuration in this family can be decomposed into two other members
of this family. In essence, this decomposition (even in a more general form) was presented also earlier 
in \cite[Representation 2.12]{perspect};
the class in question consists of configurations which generalize Desargues configuration considered as schemes of
mutual perspectives between several simplexes. On other hand, such systems of (geometrical) perspectives can be found 
even in the classical book of Veblen and Young \cite{class:proj} (G{\'e}vay quotes also explicitly Danzer and Cayley)
and its combinatorial schemes are special instances
of so called binomial graphs, investigated in the context of association schemes (cf. e.g. \cite{klin}),
and associated incidence structures. Combinatorial schemes characterizing these configurations can be found
already in \cite{levi} and \cite{coxet}.
So: 
\begin{quotation}
  the subject was known, but its regular nature was not known -- was not stated explicitly until \cite{gevay}.
\end{quotation}
But then it appeared that the ``sum'' of two configurations is not a well defined operation that depends 
solely on the summands, and the associated decomposition is, in fact, associated with a choice of a hyperplane
in the decomposed configuration. After that become clear (we present these observation in
Section \ref{sec:binconfy}, Theorem \ref{thm:decompo0} and equation \eqref{eq:decomp0}) there appeared
that there are other natural known classes of configurations that can be arranged into respective
triangles. These are, in particular, so called combinatorial Veronesians (defined originally in \cite{combver},
without any connections with studying hyperplanes in configurations). In Section \ref{sec:exm}
we discuss some of the classes which appear within this theory.

%%%%%%%%%%%%%%%%%%%%%%%%%%%%%%%%%%%%%%%%%%%%%%%%%%%%%%%%%%%%%%%%%%%%%%%%%%%%%%%%%%%%
%%%%%%%%%%%%%%%%%%%%%%%%%%%%%%%%%%%%%%%%%%%%%%%%%%%%%%%%%%%%%%% nota
%%%%%%%%%%%%%%%%%%%%%%%%%%%%%%%%%%%%%%%%%%%%%%%%%%%%%%%%%%%%%%%%%%%%%%%%%%%%%%%%%%%%
\section{Notations, standard constructions}\label{sec:nota}
%%%%%%%%%%%%%%%%%%%%%%%%%%%%%%%%%%%%%%%%%%%%%%%%%%%%%%%%%%%%%%%%%%%%%%%%%%%%%%%%%%%%
%%%%%%%%%%%%%%%%%%%%%%%%%%%%%%%%%%%%%%%%%%%%%%%%%%%%%%%%%%%%%%% intro:combin
%%%%%%%%%%%%%%%%%%%%%%%%%%%%%%%%%%%%%%%%%%%%%%%%%%%%%%%%%%%%%%%%%%%%%%%%%%%%%%%%%%%%
\subsection{Elementary combinatorics}\label{ssec:intro:combin}

There are well known formulas concerning binomial coefficients, frequently
referred to as ``Pascal Triangle of Binomials". To be more precise, 
these formulas correspond
to the arrangement of the binomial coefficients in a pyramid with consecutive rows:
\begin{ctext}
$\Big( \left(\binom{n}{k}\colon k=0,\ldots,n \right)\colon n = 0,1,2,\ldots \Big)$.
\end{ctext}
Then the corresponding recursive formula is the following
\begin{eqnarray}\label{pyramid:0}
  \binom{n}{k} & = & \binom{n-1}{k-1} + \binom{n-1}{k};
\end{eqnarray}
equation \eqref{pyramid:0} yields immediately next two:
\begin{eqnarray}\label{pyramid:1}
  \textstyle{\binom{n}{k} - \binom{n-1}{k}} & = & \textstyle{\binom{n-1}{k-1}}, \text{ and}
  \\ \label{pyramid:2}
  \textstyle{\binom{n}{k} - \binom{n-1}{k-1}} & = & \textstyle{\binom{n-1}{k}}.
\end{eqnarray}

For purposes of our next investigations it will be more convenient to arrange 
binomial coefficients into a (infinite) matrix:
\begin{ctext}
  $\big[ \ginomx(m,k)\colon m,k = 0,1,\ldots \big]$,
\end{ctext}
where
\begin{equation}
  \ginomx(m,k) = \ginom(m,k);
\end{equation}
clearly, $\ginomx(m,k) = \ginomx(k,m)$; the fundamental recursive formula for 
the binomial coefficients takes the form
\begin{equation}\label{rec:ginom}
  \ginomx(m,k) = \ginomx(m,k-1) + \ginomx(m-1,k).
\end{equation}
%

%%%%%%%%%%%%%%%%%%%%%%%%%%%%%%%%%%%%%%%%%%%%%%%%%%%%%%%%%%%%%%%%%%%%%%%%%%%%%%%%%%%%
%%%%%%%%%%%%%%%%%%%%%%%%%%%%%%%%%%%%%%%%%%%%%%%%%%%%%%%%%%%%%%% intro:config
%%%%%%%%%%%%%%%%%%%%%%%%%%%%%%%%%%%%%%%%%%%%%%%%%%%%%%%%%%%%%%%%%%%%%%%%%%%%%%%%%%%%
\subsection{Rudiments of geometry of configurations}\label{ssec:intro:config}

We say that a structure 
${\goth K} = \struct{U,\lines,\inc}$ with $\inc\; \subset U\times\lines$ 
is a {\em $\konftyp(\nu,\rho,\beta,\kappa)$-configuration} 
if $\goth K$ is a partial linear space
(i.e. $a,b \inc A,B$ yields $a =b$ or $A = B$) 
such that $|U| = \nu$, $|\lines| = \beta$, exactly $\rho$ elements of $\lines$
are in the relation $\inc$ with $a\in U$, for each $a\in U$, and exactly
$\kappa$ elements of $U$ are in the relation $\inc$ with $A \in \lines$, for each
$A\in\lines$.

Let $\goth K$ be a configuration as above, then the following equation (a specialized form
of the so called fundament equation of partial linear spaces) holds
\begin{equation}\label{equ:pls}
  \nu \cdot \rho = \beta \cdot \kappa.
\end{equation}
The elements of $U$ are called {\em points} of $\goth K$, the elements of $\lines$
are called {\em lines} of $\goth K$, and the relation $\inc$ is {\em the incidence}.
The numbers $\rho$ and $\kappa$ are referred to as {\em point rank} and
{\em line size/rank} resp.
It is a folklore, that every configuration as above with $\kappa\geq 2$ is isomorphic
to a configuration, whose lines are sets of points, and the incidence is 
the standard membership relation $\in$.
If this will not cause a confusion (as it may happen in particular examples) 
we shall frequently assume that 
the incidence of $\goth K$ is the membership relation.

A subset $\cal H$ of the set of points of $\goth K$ is called {\em a hyperplane} of $\goth K$
when 
\begin{itemize}\def\labelitemi{--}\itemsep-2pt
\item $\cal H$ is {\em a subspace} of $\goth K$, i.e. if 
  the conditions $a,b \inc A\in\lines$ and $a,b\in{\cal H}$, $a\neq b$
  yield $x \in {\cal H}$ for every $x$ such that $x \inc A$, 
\item[\strut] and
\item
  each line of $\goth K$ crosses $\cal H$, i.e. for each $A\in\lines$ there is 
  $x\in {\cal H}$ such that $x\inc A$.
\end{itemize}
Let $\cal H$ be a hyperplane of $\goth K$.
Then, for each line $A$ of $\goth K$ either there is a unique $x\in{\cal H}$ with
$x \inc A$ (we write $x = A^\infty$ in that case) or every point incident with $A$
belongs to $\cal H$: the set of such lines will be denoted by $\lines[{\cal H}]$.
Clearly, 
\begin{ctext}
  ${\goth K}\restriction{\cal H} := \struct{{\cal H},\lines[{\cal H}],%
   \inc \cap \big({\cal H}\times\lines[{\cal H}]\big) }$
\end{ctext}
is a partial linear space; quite frequently in the sequel we shall make no distinction between 
$\cal H$ and ${\goth K}\restriction{\cal H}$.
Clearly, the set $U$ of all the points of $\goth K$ is a hyperplane of $\goth K$.
In what follows we shall assume that a hyperplane means a {\em proper} (i.e. ${\cal H}\neq U$)
subspace that satisfies suitable conditions.

Given a hyperplane $\cal H$ of $\goth H$ we define {\em the reduct}
\begin{ctext}
  ${\goth K} \setminus {\cal H} :=
    \struct{ U\setminus {\cal H},\, \lines\setminus \lines[{\cal H}],\,%
    \inc \cap \big((U\setminus {\cal H})\times(\lines\setminus \lines[{\cal H}])\big)}$;
\end{ctext}
if $\kappa \geq 3$ then ${\goth K}\setminus {\cal H}$ is a partial linear
space with all the lines of size (rank) $\kappa -1$. 
Let us write, for symmetry, ${\goth K}_1 = {\goth K}\setminus {\cal H}$
and ${\goth K}_2 = {\goth K}\restriction{\cal H}$.
Recall, that we have a function $\infty$ from the lines 
of ${\goth K}_1$ into the points of ${\goth K}_2$.
Let us try to ``reverse'' this decomposition:
\begin{constr}\label{def:zlepka}
  Let ${\goth K}_i = \struct{U_i,\lines_i,\inc_i}$ be a partial linear space for $i=1,2$.
  Assume that $U_1 \cap U_2 = \emptyset = \lines_1 \cap \lines_2$.
  Let $\infty\colon\lines_1\longrightarrow U_2$ be a map such that the following
  holds
  \begin{ctext}
    if $U_1\ni x \inc A,B\in\lines_1$ and $A^\infty = B^\infty$ then $A = B$.
  \end{ctext}
  We define 
  \begin{description}\itemsep-2pt
  \item[$U:$]      $= U_1 \cup U_2$, 
  \item[$\lines:$] $= \lines_1\cup\lines_2$,
  \item[$\inc:$]   $= \inc_1 \cup \inc_2 \cup \{(x,A) \colon U_2\ni x=A^\infty, A\in\lines_1\}$.
  \end{description}
  Finally, we set 
  \begin{equation}\label{def:zlepka0}
    {\goth K}_1 \rtimes_\infty {\goth K}_2 := \struct{U,\lines,\inc}.
  \end{equation}
  It is evident that 
  {\em ${\goth K}_1 \rtimes_\infty {\goth K}_2$ is a partial linear space}.
\end{constr}
\begin{prop}
  Let ${\goth K} = {\goth K}_1 \rtimes_\infty {\goth K}_2$ with ${\goth K}_i$
  as in \ref{def:zlepka}.
  Then $U_2$ is a hyperplane in $\goth K$ and $\lines_2 = \lines[U_2]$.
\end{prop}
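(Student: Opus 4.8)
The plan is to check the two defining conditions of a hyperplane for the point set $U_2$, and then to read off $\lines[U_2]$ directly from the incidence $\inc$. The decisive observation, to be recorded first, concerns how a line meets $U_2$: by the very definition of $\inc$ in \ref{def:zlepka}, a line $A \in \lines_2$ keeps all its points inside $U_2$ (the incidence $\inc_2$ only relates points of $U_2$ to lines of $\lines_2$, and no new incidence was adjoined to the lines of $\lines_2$), whereas a line $A \in \lines_1$ receives in $\goth K$ exactly one point of $U_2$, namely $A^\infty$, all its remaining points lying in $U_1$.

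First I would show that $U_2$ is a subspace. Suppose $a,b \inc A$ with $a \neq b$ and $a,b \in U_2$. If $A \in \lines_1$, the observation above says that $A$ carries only the single $U_2$-point $A^\infty$, so two distinct points of $U_2$ cannot lie on $A$; this case is therefore vacuous. If $A \in \lines_2$, then every point incident with $A$ already belongs to $U_2$, so in particular $x \in U_2$ for each $x \inc A$. Hence the closure condition holds in either case, and $U_2$ is a subspace.

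Next I would verify the crossing condition. For $A \in \lines_1$ the point $A^\infty$ lies in $U_2$ and is incident with $A$ by construction, so $A$ meets $U_2$. For $A \in \lines_2$ every point of $A$ already lies in $U_2$, and here one only uses that the line is nonempty, which holds in the spaces we consider (for a configuration each line carries $\kappa \geq 2$ points). Together with $U_1 \neq \emptyset$, which makes $U_2 \neq U$, this shows that $U_2$ is a proper hyperplane.

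Finally, for the identification $\lines[U_2] = \lines_2$: the inclusion $\lines_2 \subseteq \lines[U_2]$ is immediate from the opening observation, since every point of a line of $\lines_2$ lies in $U_2$. Conversely, if $A \in \lines[U_2]$ then all points on $A$ lie in $U_2$; were $A \in \lines_1$, it would retain at least one original point of $U_1$ (using nonemptiness of lines together with $U_1 \cap U_2 = \emptyset$), a point outside $U_2$, which is impossible. Hence $A \in \lines_2$. I expect no genuine obstacle in this argument: it is essentially bookkeeping on the incidence relation, the only point deserving care being the nonemptiness of lines invoked in the last two steps; the injectivity-type condition imposed on $\infty$ is used only to guarantee, as already noted in \ref{def:zlepka}, that $\goth K$ is a partial linear space, and plays no further role here.
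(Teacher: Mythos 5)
Your proof is correct and takes essentially the same approach as the paper's: the paper's entire argument is the single observation that a line $A\in\lines_2$ is incident only with points of $U_2$, while a line $A\in\lines_1$ is incident with points of $U_1$ plus the one point $\infty(A)\in U_2$, from which the subspace, crossing, and $\lines[U_2]=\lines_2$ claims all follow. Your version merely spells out these consequences explicitly (and is in fact a bit more careful than the paper about nonemptiness of lines and properness of $U_2$), so there is nothing to correct.
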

\begin{proof}
  It suffices to state directly that if $A\in\lines$ then either 
  $A\in\lines_2$ and then $x\inc A$ gives $x\in U_2$, or $A\in\lines_1$ and then $x\inc A$
  yields $x\in U_1$ or $U_2 \ni x = \infty(A)$.
\end{proof}
The construction of the type \ref{def:zlepka} is quite frequent in geometry.
One particular case let us mention below:
\begin{note}\normalfont 
  Let ${\goth K}_1 = \struct{U_1,\lines_1,\inc_1,\parallel_1}$ be a partial linear
  space with parallelism of lines; we write $[A]_{\parallel_1}$ for the equivalence
  class of $A\in\lines_1$ w.r.t. the relation $\parallel_1$ 
  (i.e. simply for the direction of $A$).
  Suppose that there is a formula $\Phi$ in the 
  language of ${\goth K}_1$ such that the relation 
  \begin{ctext}
    $\{ ([A_1]_{\parallel_1},[A_2]_{\parallel_1},[A_3]_{\parallel_1})\colon %
    (A_1,A_2,A_3)\in\lines_1^3,\ \Phi(A_1,A_2,A_3) \}$
  \end{ctext}
  is a ternary equivalence relation on the set $\big( \lines_1\diagup\parallel_1 \big)^3$
  (cf. \cite{ternequiv});
  let $\lines_2$ be the set of its equivalence classes, and 
  ${\goth K}_2 = \struct{{\lines_1\diagup\parallel_1},\lines_2,\in}$.
  With $A^\infty = [A]_{\parallel_1}$ for $A\in\lines_1$ we obtain the structure
  ${\goth K} = {\goth K}_1 \rtimes_\infty {\goth K}_2$ which is called, in that context, the 
  {\em closure of an affine structure} ${\goth K}_1$.
  \par
  In particular cases of this construction, practically, the structures 
  ${\goth K}_1$ and $\goth K$ are given, and 
  {\em we search for an appropriate formula $\Phi$}
  (see \cite{afclos}: affine completion, \cite{polarclos}, \cite{segreclos}).
\myend
\end{note}
Other examples of this construction will appear in the next Section.

%%%%%%%%%%%%%%%%%%%%%%%%%%%%%%%%%%%%%%%%%%%%%%%%%%%%%%%%%%%%%%%%%%%%%%%%%%%%%%%%%%%%
%%%%%%%%%%%%%%%%%%%%%%%%%%%%%%%%%%%%%%%%%%%%%%%%%%%%%%%%%%%%%%% ssec:duale
%%%%%%%%%%%%%%%%%%%%%%%%%%%%%%%%%%%%%%%%%%%%%%%%%%%%%%%%%%%%%%%%%%%%%%%%%%%%%%%%%%%%
\subsection{Dualization}\label{ssec:duale}

Let ${\goth K} = \struct{U,\lines,\inc}$ be an incidence structure;
we call the structure
\begin{ctext}
  $\dual{{\goth K}} = \struct{\lines,U,\inc^{-1}}$
\end{ctext}
the dual of $\goth K$. It is evident that $\dual{{\goth K}}$ is a partial linear space
whenever $\goth K$ is so. In particular
\begin{ctext}
  if $\goth K$ is a $\konftyp(\nu,\rho,\beta,\kappa)$-configuration then
  $\dual{\goth K}$ is a $\konftyp(\beta,\kappa,\nu,\rho)$-configuration.
\end{ctext}
\begin{prop}\label{prop:dual-hypy}
  Let $\cal H$ be a hyperplane of a partial linear space ${\goth K} = \struct{U,\lines}$
  such that the induced correspondence $\infty$ is bijective.
  Then $\lines\setminus\lines[{\cal H}]$ is a hyperplane of $\dual{\goth K}$.
\end{prop}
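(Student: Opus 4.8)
The plan is to unwind, via dualization, the two defining clauses of a hyperplane applied to $\dual{\goth K}$ and to read each of them back as a statement about $\goth K$. Recall that the points of $\dual{\goth K}$ are the lines of $\goth K$, its lines are the points of $\goth K$, and $A\inc^{-1}x$ means exactly $x\inc A$. Write $S:=\lines\setminus\lines[{\cal H}]$; a line $A$ of $\goth K$ lies in $S$ precisely when $A\notin\lines[{\cal H}]$, i.e.\ when $A$ carries a unique point $A^\infty$ of $\cal H$. Under this translation, ``$S$ is a subspace of $\dual{\goth K}$'' becomes: whenever a point $x$ of $\goth K$ lies on two distinct lines $A,B\in S$, every line through $x$ belongs to $S$; and ``every line of $\dual{\goth K}$ crosses $S$'' becomes: every point $x$ of $\goth K$ lies on at least one line of $S$.

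For the subspace clause I would argue by contradiction, and this is where injectivity of $\infty$ enters. Suppose $x\inc A,B$ with $A\neq B$ and $A,B\in S$, yet some line $C$ through $x$ satisfies $C\in\lines[{\cal H}]$. Then all points of $C$, in particular $x$, lie in $\cal H$. Since $A,B\in S$, each of them meets $\cal H$ in a single point; as $x\in{\cal H}$ is incident with both, uniqueness forces $x=A^\infty$ and $x=B^\infty$, hence $A^\infty=B^\infty$ with $A\neq B$, contradicting the injectivity of $\infty$. Therefore no line through $x$ can lie in $\lines[{\cal H}]$, so all of them belong to $S$, which is exactly the subspace condition.

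For the crossing clause I would split on whether $x\in{\cal H}$, and here surjectivity of $\infty$ is the decisive ingredient. If $x\notin{\cal H}$, then any line $A$ through $x$ must have $A\notin\lines[{\cal H}]$ (otherwise $x$ would lie in $\cal H$), so $A\in S$ and $x$ crosses $S$; this uses only the standing nondegeneracy that $x$ lies on some line, which is automatic for configurations. If $x\in{\cal H}$, then surjectivity produces a line $A\in S$ with $A^\infty=x$, so $A$ passes through $x$ and lies in $S$, and again $x$ crosses $S$. Since $\dual{\goth K}$ is itself a partial linear space, the two verified clauses show $S$ is a hyperplane of $\dual{\goth K}$, with properness inherited as soon as $\cal H$ contains two collinear points, forcing some line into $\lines[{\cal H}]$ and hence $S\neq\lines$. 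The proof carries no real computation; the only genuine care is the bookkeeping of the reversed incidence, and the one conceptual point to get right is that injectivity and surjectivity of $\infty$ each feed exactly one of the two hyperplane axioms.
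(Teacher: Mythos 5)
Your proof is correct and takes essentially the same route as the paper's: you establish the subspace clause by contradiction using injectivity of $\infty$ (two distinct lines of $S$ through a point of ${\cal H}$ would have equal $\infty$-images), and the crossing clause by the case split on whether the dual line lies in ${\cal H}$, with surjectivity supplying the preimage $\infty^{-1}(x)$. Your extra remarks on nondegeneracy (every point lies on some line) and on properness of the dual hyperplane are points the paper leaves tacit, but the core argument is identical.
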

\begin{proof}
  Let $L_1,L_2 \in \lines\setminus\lines[{\cal H}]$. Assume that 
  $L_1,L_2\inc^{-1} x\in U$  and $\lines\ni L \inc^{-1} x$.
  Suppose that $L\notin\lines\setminus\lines[{\cal H}]$, then $L\in\lines[{\cal H}]$
  and, consequently, $x\in{\cal H}$. This gives $x = \infty(L_1) = \infty(L_2)$;
  we have $L_1 = L_2$ then. This proves that $\lines[{\cal H}]$
  is a subspace of $\dual{\goth K}$.
  \par
  Let $L$ be an arbitrary line of $\dual{\goth K}$, then $L \in U$. If $L\notin{\cal H}$
  then each line of $\goth K$ (each point of $\dual{\goth K}$) that passes through $L$
  is in $\lines\setminus\lines[{\cal H}]$. If $L\in{\cal H}$ then 
  $\infty^{-1}(L) \inc^{-1} L$. This suffices for the proof.
\end{proof}
Standard examples show that the condition {\em $\infty$ is bijective} 
assumed in \ref{prop:dual-hypy} cannot be removed. Indeed, the plane in a 
projective 3-space $\goth P$
is a hyperplane, but the family of lines of the resulting affine 3-space is not
even a subspace of $\dual{\goth P}$.
However, \ref{prop:dual-hypy} appears useful when we deal with (binomial) configurations.
Proposition \ref{prop:dual-hypy} can be easily (re)formulated in a more `constructive'
fashion:
\begin{cor}\label{cor:dual-hipy}
  Let ${\goth K}_i$ be configurations as in \ref{def:zlepka} with a suitable map 
  $\infty$ defined. Assume that $\infty$ is a bijection and
  ${\goth K} = {\goth K}_1\rtimes_\infty {\goth K}_2$.
  Then
  \begin{equation}\label{wzor:dual-hipy}
    \dual{\goth K}\quad = \quad \dual{{\goth K}_2} \rtimes_{\infty^{-1}} \dual{{\goth K}_1}
  \end{equation}
\end{cor}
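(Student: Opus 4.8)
The plan is to unfold both sides of \eqref{wzor:dual-hipy} directly from the definitions and to verify that, viewed as triples $\struct{\text{points},\text{lines},\text{incidence}}$, they coincide term by term; no structural theorem is needed, only careful bookkeeping of which objects become points and which become lines after dualization. First I would dualize the left-hand side. By Construction \ref{def:zlepka} the structure $\goth K = \goth K_1 \rtimes_\infty \goth K_2$ has incidence relation $\inc_1 \cup \inc_2 \cup \{(x,A)\colon x\in U_2,\ A\in\lines_1,\ x = A^\infty\}$, and so, by the definition of the dual, $\dual{\goth K}$ has point set $\lines = \lines_1 \cup \lines_2$, line set $U = U_1 \cup U_2$, and incidence $\inc^{-1} = \inc_1^{-1} \cup \inc_2^{-1} \cup \{(A,x)\colon A\in\lines_1,\ x\in U_2,\ x = A^\infty\}$.

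Next I would assemble the right-hand side from Construction \ref{def:zlepka}. Here $\dual{\goth K_2} = \struct{\lines_2,U_2,\inc_2^{-1}}$ is the first factor and $\dual{\goth K_1} = \struct{\lines_1,U_1,\inc_1^{-1}}$ the second, so the gluing map must run from the lines $U_2$ of $\dual{\goth K_2}$ to the points $\lines_1$ of $\dual{\goth K_1}$; this is exactly the domain and codomain of $\infty^{-1}$, which exists precisely because $\infty$ is assumed bijective. Before the product is legitimate I must check the admissibility condition of \ref{def:zlepka} for $\infty^{-1}$, but since $\infty^{-1}$ is injective, $\infty^{-1}(A) = \infty^{-1}(B)$ already forces $A = B$, so the condition holds irrespective of any incidence. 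Applying the definition, $\dual{\goth K_2} \rtimes_{\infty^{-1}} \dual{\goth K_1}$ has point set $\lines_2 \cup \lines_1$, line set $U_2 \cup U_1$, and incidence $\inc_2^{-1} \cup \inc_1^{-1} \cup \{(\ell,m)\colon \ell\in\lines_1,\ m\in U_2,\ \ell = \infty^{-1}(m)\}$.

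The remaining step is the comparison. The point sets and line sets agree at once, and the first two summands of the two incidence relations agree by commutativity of union, so everything reduces to matching the two ``cross'' sets. The single nontrivial observation is the equivalence $\ell = \infty^{-1}(m) \Longleftrightarrow m = \infty(\ell) = \ell^\infty$, valid because $\infty$ is a bijection; rewriting the right-hand cross set through it turns it into $\{(\ell,m)\colon \ell\in\lines_1,\ m\in U_2,\ m = \ell^\infty\}$, which is literally the cross set obtained from $\dual{\goth K}$. Hence the two incidence relations coincide and \eqref{wzor:dual-hipy} follows. I expect no genuine obstacle here: the only places one can slip are in tracking the point/line swap induced by $\dual{(\,\cdot\,)}$ and in remembering that passing from $\infty$ to $\infty^{-1}$ is exactly what interchanges the two factors, so that $\dual{\goth K_2}$ must be written before $\dual{\goth K_1}$.
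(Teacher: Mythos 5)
Your proof is correct: the point sets, the line sets, and the three pieces of the incidence relation match exactly as you claim, and your observation that the admissibility condition of Construction \ref{def:zlepka} holds trivially for $\infty^{-1}$ (by injectivity) is precisely the check needed before the right-hand side is even well formed. Your route, however, is not the paper's. The paper gives no standalone computation for this corollary: it presents \eqref{wzor:dual-hipy} as a ``constructive reformulation'' of Proposition \ref{prop:dual-hypy}, whose proof shows that $\lines\setminus\lines[{\cal H}]$ (here $\lines_1$) is a hyperplane of $\dual{\goth K}$ --- i.e.\ it verifies the subspace property and that every line of $\dual{\goth K}$ meets $\lines_1$, using bijectivity of $\infty$ in both steps. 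That approach embeds the statement in the hyperplane framework that drives the rest of the paper (a hyperplane whose map $\infty$ is bijective dualizes to a hyperplane, hence dual decompositions exist), but it leaves implicit the identification of the restriction with $\dual{{\goth K}_1}$, of the reduct with $\dual{{\goth K}_2}$, and of the gluing map with $\infty^{-1}$. Your term-by-term unfolding supplies exactly that missing bookkeeping, and in fact establishes the literal equality of the two triples without appealing to the proposition at all; what it does not give you is the conceptual statement (duality carries such hyperplanes to hyperplanes), which is what the paper actually reuses later, e.g.\ when constructing the triangle of dual combinatorial Veronesians in Subsection \ref{exm:duvery}.
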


%%%%%%%%%%%%%%%%%%%%%%%%%%%%%%%%%%%%%%%%%%%%%%%%%%%%%%%%%%%%%%%%%%%%%%%%%%%%%%%%%%%%
%%%%%%%%%%%%%%%%%%%%%%%%%%%%%%%%%%%%%%%%%%%%%%%%%%%%%%%%%%%%%%% binconfy
%%%%%%%%%%%%%%%%%%%%%%%%%%%%%%%%%%%%%%%%%%%%%%%%%%%%%%%%%%%%%%%%%%%%%%%%%%%%%%%%%%%%
\section{Binomial configurations}\label{sec:binconfy}

%%%%%%%%%%%%%%%%%%%%%%%%%%%%%%%%%%%%%%%%%%%%%%%%%%%%%%%%%%%%%%%%%%%%%%%%%%%%%%%%%%%%
%%%%%%%%%%%%%%%%%%%%%%%%%%%%%%%%%%%%%%%%%%%%%%%%%%%%%%%%%%%%%%% binconf:general
%%%%%%%%%%%%%%%%%%%%%%%%%%%%%%%%%%%%%%%%%%%%%%%%%%%%%%%%%%%%%%%%%%%%%%%%%%%%%%%%%%%%
\subsection{Generalities}

The main subject of this section consists in investigations
on the family of {\em binomial configurations}
i.e. of configurations of the type 
$\ginconf(k,m)$ for some positive integers $k,m$.
It is easily seen that each parameters of this form satisfy \eqref{equ:pls}.
Let us write
\begin{ctext}
  $\ginkonfx(k,m)$ for the class of all $\ginconf(k,m)$-configurations.
\end{ctext}

\begin{thm}\label{thm:decompo0}
  Let ${\goth K}\in\ginkonfx(k,m)$ and let $\cal H$ be a hyperplane of $\goth K$.
  Assume that 
  \begin{sentences}\itemsep-2pt
  \item\label{war1}
    $\cal H$ is a configuration (in this case this means simply that 
    ${\goth K}\restriction{\cal H}$ has constant point rank),
    and
  \item\label{war2}
    ${\goth K}\setminus{\cal H}$ is a binomial configuration.
  \setcounter{zdanie}{\value{sentence}}
  \end{sentences}
  Then
  \begin{sentences}\itemsep-2pt\setcounter{sentence}{\value{zdanie}}
  \item\label{war3}
    $\cal H$ is a binomial configuration, more precisely:
      ${\goth K}_2 = {\goth K}\restriction{\cal H}\in\ginkonfx(k-1,m)$;
%%      where ${\goth K}\setminus{\cal H}\in\ginconfx()$;
  \item\label{war4}
    ${\goth K}_1 = {\goth K}\setminus{\cal H}\in\ginkonfx(k,m-1)$;
  \item\label{war5}
    there is a 1-1 correspondence 
    $\infty\colon\text{lines of }{\goth K}_1 \longrightarrow\text{ points of }{\goth K}_2$
    such that 
    ${\goth K} = {\goth K}_1 \rtimes_\infty {\goth K}_2$.
  \setcounter{zdanie}{\value{sentence}}
  \end{sentences}
\end{thm}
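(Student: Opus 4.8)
The plan is to work throughout with the partition of $\goth K$ induced by the hyperplane, namely $U = \cal H \cup (U\setminus\cal H)$ and $\lines = \lines[\cal H] \cup (\lines\setminus\lines[\cal H])$ (both disjoint), recalling that the lines of $\goth K_2 = \goth K\restriction\cal H$ are exactly those in $\lines[\cal H]$, while the lines of $\goth K_1 = \goth K\setminus\cal H$ are the reducts of the lines in $\lines\setminus\lines[\cal H]$. First I would read off the two ranks of $\goth K_1$ directly from this description. A point $x \in U\setminus\cal H$ lies on no member of $\lines[\cal H]$ (such a line is contained in $\cal H$), so all $k$ lines of $\goth K$ through $x$ survive in $\goth K_1$; hence $\goth K_1$ has point rank $k$. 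A line $A\in\lines\setminus\lines[\cal H]$ meets $\cal H$ in the single point $A^\infty$, so its reduct has exactly $m-1$ points; hence $\goth K_1$ has line size $m-1$. Thus $\goth K_1$ is a configuration of these ranks, and invoking \ref{war2} (that it is binomial) forces $\goth K_1 \in \ginkonfx(k,m-1)$, which proves \ref{war4}; in particular it fixes $\nu_1 = |U\setminus\cal H| = \binom{k+m-2}{k}$ and $\beta_1 = |\lines\setminus\lines[\cal H]| = \binom{k+m-2}{m-1}$.

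Next I would collect the immediate parameters of $\goth K_2$. Since $\goth K$ has $\nu = \binom{k+m-1}{k}$ points, the Pascal rule \eqref{pyramid:0} gives $\nu_2 = |\cal H| = \nu - \nu_1 = \binom{k+m-2}{k-1}$; and, as the members of $\lines[\cal H]$ are full lines of $\goth K$, the line size of $\goth K_2$ is $m$. The decisive bookkeeping fact is the coincidence $\beta_1 = \binom{k+m-2}{m-1} = \binom{k+m-2}{k-1} = \nu_2$: the number of lines of $\goth K_1$ equals the number of points of $\goth K_2$, which is exactly the cardinality balance a bijection $\infty$ demands.

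The heart of the argument, and the step I expect to be the main obstacle, is to show that $\infty\colon A\mapsto A^\infty$, viewed as a map from the lines of $\goth K_1$ to the points of $\goth K_2$, is a bijection, and simultaneously to pin down the point rank of $\goth K_2$; this is where \ref{war1} is used. Fix $x\in\cal H$ and count the $k$ lines of $\goth K$ through $x$. Those contained in $\cal H$ number $\rho_2$, the point rank of $\goth K_2$, which is constant by \ref{war1}; and a line $A\in\lines\setminus\lines[\cal H]$ passes through $x$ precisely when $A^\infty = x$, so the remaining $k-\rho_2$ lines through $x$ are exactly the $\infty$-preimages of $x$. Hence $|\infty^{-1}(x)| = k-\rho_2$ is independent of $x$. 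Summing over $\cal H$, and using that each line of $\lines\setminus\lines[\cal H]$ contributes its single image, I obtain $\nu_2(k-\rho_2) = \beta_1 = \nu_2$, whence $\rho_2 = k-1$ and $|\infty^{-1}(x)| = 1$ for every $x$. This at once gives \ref{war3} (the type of $\goth K_2$ is now forced, so $\goth K_2\in\ginkonfx(k-1,m)$) and shows that $\infty$ is a bijection.

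It remains to establish \ref{war5}, i.e. that $\goth K = \goth K_1 \rtimes_\infty \goth K_2$ in the sense of Construction \ref{def:zlepka}. The point sets and the line sets of the two summands are disjoint by construction, and the injectivity of $\infty$ immediately yields the gluing condition required there. I would then compare the incidence relation of $\goth K$ with the one prescribed in \eqref{def:zlepka0}: the incidences internal to $\goth K_1$ and to $\goth K_2$ are recorded by $\inc_1$ and $\inc_2$, and the only further incidences of $\goth K$ are those joining a point of $\cal H$ to a line of $\lines\setminus\lines[\cal H]$ — which are precisely the pairs $(A^\infty, A)$ adjoined in the construction — because no point of $U\setminus\cal H$ lies on a line of $\lines[\cal H]$. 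The two incidence relations therefore coincide, which proves $\goth K = \goth K_1\rtimes_\infty\goth K_2$ and completes the argument.
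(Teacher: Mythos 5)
Your proposal is correct, and its overall skeleton matches the paper's proof: you establish \ref{war4} first by reading off the point rank $k$ and line size $m-1$ of ${\goth K}_1$ and invoking assumption \ref{war2}, and you obtain $\nu_2 = |{\cal H}| = \binom{k+m-2}{k-1}$ from the Pascal rule, exactly as the paper does. Where you genuinely diverge is in how the point rank of ${\goth K}_2$ and the bijectivity of $\infty$ are extracted. The paper also computes the \emph{line} count of ${\goth K}_2$ by a second application of the Pascal rule, $\beta_2 = \binom{n}{m} - \binom{n-1}{m-1} = \binom{n-1}{m}$ with $n = m+k-1$, and then gets $\rho_2 = k-1$ from the fundamental equation \eqref{equ:pls} applied to ${\goth K}_2$ (this is where \ref{war1} enters); bijectivity of $\infty$ then follows as a final remark from the rank drop $k \to k-1$. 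You instead never need $\beta_2$ or \eqref{equ:pls}: you exploit the coincidence $\beta_1 = \binom{k+m-2}{m-1} = \binom{k+m-2}{k-1} = \nu_2$ and double-count the flags $(x,A)$ with $x \in {\cal H}$, $A \in \lines\setminus\lines[{\cal H}]$, getting $\nu_2(k-\rho_2) = \beta_1 = \nu_2$, which delivers $\rho_2 = k-1$ and $|\infty^{-1}(x)| = 1$ in a single stroke. This is a slightly more self-contained and elementary route to the same two facts, and it makes transparent that \ref{war3} and \ref{war5} are really one counting statement; the paper's route is shorter on the page because it delegates the arithmetic to \eqref{equ:pls}. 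A further point in your favour: you explicitly verify that the incidence of $\goth K$ coincides with that of ${\goth K}_1 \rtimes_\infty {\goth K}_2$ from Construction \ref{def:zlepka} (including that no point of $U\setminus{\cal H}$ lies on a line of $\lines[{\cal H}]$), a step the paper leaves implicit after asserting that $\infty$ is a bijection.
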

\begin{proof}
  Recall that, right from the definition, the points of $\goth K$ have rank $k$, 
  and the lines of $\goth K$ have size $m$.
  Set $n=m+k-1$.
  
  Then, 
  from the definition we get immediately that 
  the points of ${\goth K}_1$ are all of the same rank $k$ and 
  the lines are all of the size $m-1$, 
  so, in accordance with \eqref{war2},
  ${\goth K}_1$ is a $\ginconf(k,m-1)$-configuration, which justifies \eqref{war4}.
  The number of points in ${\goth K}_1$ is $\binom{n-1}{k}$ and 
  the number of points of $\goth K$ is $\binom{n}{k}$; 
  from the Pascalian equations the number of points 
  of ${\goth K}_2$ is $\binom{n}{k} - \binom{n-1}{k} = \binom{n-1}{k-1}$.
  Similarly we compute the number of lines of ${\goth K}_2$:
  it equals to $\binom{n}{m} - \binom{n-1}{m-1} = \binom{n-1}{m}$.
  The size of the lines in $\cal H$ is $m$; 
  from assumption \eqref{war1} and \eqref{equ:pls}
  applied to ${\goth K}_2$ we get that the point rank in ${\goth K}_2$ equals to $k-1$.
  So, ${\goth K}_2$ is a $\ginconf(k-1,m)$-configuration.
  This justifies \eqref{war3}.
  Finally, since each point in ${\goth K}_2$ has its rank on one less than in $\goth K$
  we get that through each one of these points there passes exactly one line of ${\goth K}_1$,
  so $\infty$ is a bijection, as required in \eqref{war5}.
\end{proof}
Informally speaking, \ref{thm:decompo0} gives a decomposition
\begin{equation}\label{eq:decomp0}
  \ginconfx(k,m) = \ginconfx(k,m-1) \rtimes_\infty \ginconfx(k-1,m),
\end{equation}
which resembles reverent Pascalian equation \eqref{rec:ginom}.
But note, that the ``operation'' $\rtimes_\infty$ is not commutative, and 
it depends essentially on the parameter $\infty$.

\begin{rema}\normalfont
  {\it Not every hyperplane of a binomial configuration is a (binomial) configuration.}
  Indeed, it suffices to have a look on hyperplanes in binomial partial Steiner
  triple systems, either in a more general approach of \cite{hypinbin:psts} or
  in a more particular case of \cite{hypingendes} and note that in the Desargues
  configuration 
  a line accomplished with a point not joinable with any point on this line 
  is a hyperplane, it contains three points of rank $3$ and one point of rank $0$
  so, it is not a configuration.
\myend
\end{rema}
\begin{rema}\normalfont
  Let us consider the smallest sensible and possible case:
  $\ginconfx(2,3) \rtimes \ginconfx(3,2) = \ginconfx(3,3)$.
  If ${\goth K}\in\ginconfx(3,3)$ then $\goth K$ is a 
  $\ginconf(3,3)= \konftyp(10,3,10,3)$-configuration: one of
  ten possible.
  If ${\goth K}_1$ is a $\ginconf(3,2)=\konftyp(4,3,6,2)$-configuration then
  it is the complete graph $K_4$.
  If ${\goth K}_2$ is a $\ginconf(2,3)=\konftyp(6,2,4,3)$-configuration then
  it is simply the Pasch-Veblen configuration $\goth V$.
  It was shown in \cite{klik:VC} that there are exactly six maps $\infty$ which yield
  pair wise non isomorphic configurations $K_4 \rtimes_\infty {\goth V}$.
  So, {\em there are binomial configurations ${\goth K}_1,{\goth K}_2$
  and bijections 
    $\infty',\infty''\colon\text{lines of }{\goth K}_1 \longrightarrow\text{ points of }{\goth K}_2$
  such that 
  ${\goth K}_1\rtimes_{\infty'}{\goth K}_2 \not\cong {\goth K}_1\rtimes_{\infty''}{\goth K}_2$.}
  Consequently, the symbol $\rtimes$ {\em is not a well defined operation,
  without the argument $\infty$ defined explicitly}.
\end{rema}
\begin{rema}\normalfont
  Let ${\goth K}_1,{\goth K}_2$ be binomial configurations, let a  map
    $\infty\colon\text{lines of }{\goth K}_1 \longrightarrow\text{ points of }{\goth K}_2$
  be a bijection. 
%%  Then ${\goth K}_1 \rtimes_\infty{\goth K}_2$ is a binomial configuration
%% \end{rem}
%
%% \begin{proof}
\par
  From assumption, ${\goth K}_i\in \ginconfx(k_i,m_i)$ for some integers $k_i,m_i$, $i=1,2$.
  Moreover, the two numbers: of lines of ${\goth K}_1$ and of points of ${\goth K}_2$
  coincide. This means than
  $\binom{k_1+m_1-1}{m_1} = \binom{k_2+m_2-1}{k_2}$.
  Then $k_1+m_1-1 = k_2+m_2-1$ and one of the following holds:
  \begin{sentences}
  \item
    either $m_1 = m_2-1$ -- in this case $k_2 = k_1-1$ and
    {\it ${\goth K} = {\goth K}_1 \rtimes_\infty{\goth K}_2$ is a binomial configuration},
  \item
    or $m_1 = k_2$ and then $k_1 = m_2$. Consider e.g. the case $k_1=m_1=k_2=m_2=3$, then
    ${\goth K}_i$ are $\konftyp(10,3,10,3)$-configurations. But then
    ${\goth K} = {\goth K}_1 \rtimes_\infty{\goth K}_2$ has $20$ points and $20$
    lines. Ten lines have size $3$, and ten have size $4$. So, in this case 
    {\it $\goth K$ is not even a configuration.}%\myend
  \end{sentences}
  This shows that a `sum' of two binomial configurations, even determined by constructing `improper
  points', may be not a binomial configuration.\myend
\end{rema}

In the next Section we present two remarkable families of binomial configurations
which yield  families indexed by positive integers and which yield ``a Pascal Triangle".

%%%%%%%%%%%%%%%%%%%%%%%%%%%%%%%%%%%%%%%%%%%%%%%%%%%%%%%%%%%%%%%%%%%%%%%%%%%%%%%%%%%%
%%%%%%%%%%%%%%%%%%%%%%%%%%%%%%%%%%%%%%%%%%%%%%%%%%%%%%%%%%%%%%% examples
%%%%%%%%%%%%%%%%%%%%%%%%%%%%%%%%%%%%%%%%%%%%%%%%%%%%%%%%%%%%%%%%%%%%%%%%%%%%%%%%%%%%
\section{Examples}\label{sec:exm}
%%%%%%%%%%%%%%%%%%%%%%%%%%%%%%%%%%%%%%%%%%%%%%%%%%%%%%%%%%%%%%%%%%%%%%%%%%%%%%%%%%%%
%%%%%%%%%%%%%%%%%%%%%%%%%%%%%%%%%%%%%%%%%%%%%%%%%%%%%%%%%%%%%%% example: grass
%%%%%%%%%%%%%%%%%%%%%%%%%%%%%%%%%%%%%%%%%%%%%%%%%%%%%%%%%%%%%%%%%%%%%%%%%%%%%%%%%%%%
\subsection{Example: the family of combinatorial Grassmannians}\label{exm:grasy}

For an integer $k$ and a set $X$ we write $\sub_k(X)$ for the family of $k$-subsets
of $X$.
Nowadays the  notation $\binom{X}{k}$ instead of $\sub_k(X)$ becomes widely used.
We prefer, however, not to mix integers and sets.

Let ${\goth K}\in\ginconfx(k,m)$; then the points of $\goth K$ can be identified with 
the $k$-subsets of a fixed $n$-element set $X$, where $n = m+k-1$.
Let us identify the lines of $\goth K$ with the elements of $\sub_m(X)$ and
define 
\begin{equation}\label{def:inc:gras0}
  a \inc A :\iff a \in \sub_k(X) \land A \in \sub_m(X) \land |a \cap A|=1.
\end{equation}
Suppose that $a\neq b$ and $a,b \inc A$ with $\inc$ defined by \eqref{def:inc:gras0}.
Then $a\cap b = X\setminus A$ and therefore $A$ is uniquely determined by its 
two points $a$ and $b$. So, the structure
\begin{ctext}
  ${\goth G}(k,m) := \struct{\sub_k(X),\sub_m(X),\inc}$
\end{ctext}
is a partial linear space. It is not too hard to verify that it is
a configuration with the lines of size $m$ and the points of rank $k$,
so ${\goth G}(k,m)\in\ginconfx(k,m)$.

In practice, the above presentation is not so easy to handle with and not too intuitive.
\begin{sentences} %%\let\sentence\oldsent \def\thesentence{\Roman\value{oldsent}}
\item
  There is a one-to-one correspondence between the elements of $\sub_m(X)$ and 
  the elements of $\sub_{k-1}(X)$: indeed, $n = m + (k-1)$ so, the boolean complementation
  $\varkappa$ is a bijection in question. Then we see that the pair of maps
  $(\id,\varkappa)$ maps ${\goth G}(k,m)$ onto the structure
  $\struct{\sub_k(X),\sub_{k-1}(X),\supset}$, which coincides with the $DCD(n,k)$
  introduced in \cite{gevay}.
\item
  Analogously, there is a one-to-one correspondence between the elements of 
  $\sub_{m-1}(X)$ and the elements of $\sub_k(X)$; set $k_0 = m-1$, then
  $(\varkappa,\id)$ maps ${\goth G}(m,k)$ onto the structure
  $\struct{\sub_{k_0}(X),\sub_{k_0+1}(X),\subset}$, which coincides with the 
  {\em combinatorial Grassmannian} $\GrasSpace(X,k_0)$ defined in \cite{perspect}.
\end{sentences}

Let us concentrate upon the presentation given in \cite{perspect}, let us 
drop out the superfluous index $0$ and let 
${\goth K} = \GrasSpace(X,k)$, $|X| = n$; remember that 
$\GrasSpace(X,k) \in \ginconfx(n-k,k+1)$.
We write $\GrasSpace(n,k)$ for the type of $\GrasSpace(X,k)$ where $|X| = n$.

Let us fix an element $i\in X$, then $\sub_k(X)$ is the disjoint union
$\sub_k(X) = {\cal X}_1 \cup {\cal X}_2$, where 
${\cal X}_1 = \{ a\in \sub_k(X)\colon i\in a \}$ and
${\cal X}_2 = \{ a\in\sub_k(x)\colon i \notin a\} = \sub_k(X\setminus\{i\})$.
The following is easily seen:
\begin{sentences}
\item
  ${\cal X}_2$ is a hyperplane of $\goth K$, 
  ${\goth K}_2 := {\goth K}\restriction{{\cal X}_2} = \GrasSpace(X\setminus\{i\},k)$
\item
  ${\goth K}_1 = {\goth K}\setminus{\cal X}_2$, with the point-set ${\cal X}_1$,
  is isomorphic under the map 
  ${\cal X}_1\ni a \longmapsto a\setminus\{ i \}\in\sub_{k-1}(X\setminus\{ i \})$
  to the structure $\GrasSpace(X\setminus\{i\},k-1)$.
\item\label{jawne1}
  Let $A$ be a line of ${\goth K}_1$, so $A\in\sub_{k+1}(X)$ where $i\in A$.
  Then $A \setminus \{ i \}\in \sub_k(A)\cap {\cal X}_2$, so $A^\infty = A\setminus\{i\}$.
\end{sentences}
In view of the above and \ref{thm:decompo0} we get that
\begin{prop}
  If $i \in X$ is arbitrary then
  \begin{equation}
    \GrasSpace(X,k) \cong \GrasSpace(X\setminus\{i\},k-1)\rtimes_\infty\GrasSpace(X\setminus\{i\},k)
  \end{equation}  
  with $\infty$ defined by \eqref{jawne1} above.
\end{prop}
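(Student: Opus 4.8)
The plan is to verify that the three bulleted observations immediately preceding the proposition are correct, and then to feed them into Theorem~\ref{thm:decompo0} to conclude. The whole statement is really a matter of recognizing that the partition $\sub_k(X) = {\cal X}_1 \cup {\cal X}_2$ induced by fixing an element $i \in X$ is exactly an instance of the hyperplane decomposition established in general for binomial configurations. So first I would confirm that ${\cal X}_2 = \sub_k(X \setminus \{i\})$ is a hyperplane of ${\goth K} = \GrasSpace(X,k)$. For this I must check the two defining conditions of a hyperplane. A line of $\GrasSpace(X,k)$ is an element $A \in \sub_{k+1}(X)$, incident with those $k$-subsets it contains (under the $\subset$ presentation); the subspace condition says that if two distinct $k$-subsets $a, b \subset A$ both avoid $i$, then every $k$-subset contained in $A$ avoids $i$, which holds precisely because $a \cup b = A$ forces $i \notin A$. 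The crossing condition requires every line $A$ to meet ${\cal X}_2$, i.e.\ every $(k+1)$-subset $A$ contains at least one $k$-subset avoiding $i$, which is immediate since $A \setminus \{j\}$ for any $j \neq i$ in $A$ works (and such $j$ exists as $|A| = k+1 \geq 2$).

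Next I would identify the two reducts explicitly. For ${\goth K}_2 = {\goth K}\restriction{\cal X}_2$: its lines are exactly the $(k+1)$-subsets $A$ with $A \subseteq X \setminus \{i\}$, i.e.\ those entirely contained in the hyperplane, and its points are the $k$-subsets avoiding $i$; hence ${\goth K}_2 = \GrasSpace(X \setminus \{i\}, k)$ by definition. For ${\goth K}_1 = {\goth K} \setminus {\cal X}_2$: its points are the $k$-subsets containing $i$, and the map $a \mapsto a \setminus \{i\}$ sends them bijectively to $\sub_{k-1}(X \setminus \{i\})$; I would check that this map carries incidence to incidence, namely that a line $A \ni i$ of ${\goth K}_1$ corresponds to $A \setminus \{i\} \in \sub_k(X \setminus \{i\})$, which is the correct line-set of $\GrasSpace(X \setminus \{i\}, k-1)$, so that ${\goth K}_1 \cong \GrasSpace(X \setminus \{i\}, k-1)$.

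Finally I would read off the correspondence $\infty$. A line $A$ of ${\goth K}_1$ is a $(k+1)$-subset with $i \in A$; the unique point of ${\cal X}_2$ incident with $A$ is $A \setminus \{i\}$, since that is the only $k$-subset of $A$ avoiding $i$. This is exactly the definition $A^\infty = A \setminus \{i\}$ recorded in item~\eqref{jawne1}. With all hypotheses of Theorem~\ref{thm:decompo0} in place — ${\cal X}_2$ is a hyperplane that is itself a binomial configuration, and ${\goth K}_1$ is a binomial configuration — conclusion~\eqref{war5} of that theorem yields ${\goth K} = {\goth K}_1 \rtimes_\infty {\goth K}_2$, and substituting the two identifications gives the claimed isomorphism.

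I do not expect any genuine obstacle here; the statement is essentially a corollary of Theorem~\ref{thm:decompo0} once the combinatorial bookkeeping is done. The only place demanding a little care is verifying that the abstract map $\infty$ supplied by the general decomposition coincides with the concrete set-theoretic map $A \mapsto A \setminus \{i\}$, rather than merely being some bijection of the right cardinality; but this is settled by the uniqueness of the point $A \setminus \{i\}$ in $\sub_k(A) \cap {\cal X}_2$, which the preceding discussion already notes. Since $i \in X$ was chosen arbitrarily, the isomorphism holds for every $i$, giving the full statement.
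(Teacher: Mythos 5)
Your proposal follows exactly the paper's route: the paper presents the three observations (hyperplane, identification of the two reducts, explicit form of $\infty$) as ``easily seen'' and then invokes Theorem \ref{thm:decompo0}; you simply supply those verifications in full and make the final substitution of identifications explicit. The identifications of ${\goth K}_2$ with $\GrasSpace(X\setminus\{i\},k)$, of ${\goth K}_1$ with $\GrasSpace(X\setminus\{i\},k-1)$ via $a\mapsto a\setminus\{i\}$, and of the abstract map $\infty$ with the concrete map $A\mapsto A\setminus\{i\}$ are all correct, and the hypotheses \eqref{war1} and \eqref{war2} of Theorem \ref{thm:decompo0} are indeed met.

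One step, as literally written, is false: in checking that every line crosses ${\cal X}_2$ you offer the witness $A\setminus\{j\}$ for an arbitrary $j\in A$ with $j\neq i$. In the only case where the crossing condition is not automatic, namely $i\in A$, that set still contains $i$ and hence does \emph{not} lie in ${\cal X}_2$; the correct (and unique) witness there is $A\setminus\{i\}$, whereas for $i\notin A$ every point of $A$ already lies in ${\cal X}_2$. This is a local slip rather than a structural gap: your own third paragraph states correctly that $A\setminus\{i\}$ is the only $k$-subset of $A$ avoiding $i$, which is precisely what the crossing condition needs. With that one sentence repaired, the proof is complete and coincides with the paper's argument.
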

In numerical symbols we can write:
$$\GrasSpace(n,k) = \GrasSpace(n-1,k-1)\rtimes_\infty\GrasSpace(n-1,k).$$
This decomposition was studied in many details in \cite{gevay}, it was also
noticed in \cite[Representation 2.12]{perspect}.
While expressed in terms of ${\goth G}(k,m)$ it assumes the form
$${\goth G}(k_0,m_0) = {\goth G}(k_0,m_0-1)\rtimes_\infty{\goth G}(k_0-1,m_0),$$
where $k_0 = n-k$, $m_0 = k+1$.

%%%%%%%%%%%%%%%%%%%%%%%%%%%%%%%%%%%%%%%%%%%%%%%%%%%%%%%%%%%%%%%%%%%%%%%%%%%%%%%%%%%%
%%%%%%%%%%%%%%%%%%%%%%%%%%%%%%%%%%%%%%%%%%%%%%%%%%%%%%%%%%%%%%% example: veron
%%%%%%%%%%%%%%%%%%%%%%%%%%%%%%%%%%%%%%%%%%%%%%%%%%%%%%%%%%%%%%%%%%%%%%%%%%%%%%%%%%%%
\subsection{Example: the family of combinatorial Veronesians}\label{exm:very}

Let $X$ be an $m$-element set; we write $\msub_k(X)$ for the $k$-element 
multisets with the elements in $X$. In  naive words, a multiset is a `set'
whose elements belong to $X$, and each one of them can occur several times.
Formally, it is a function $f$ defined on $X$ with values in the set of natural numbers
(with zero); this function `counts' how many times given item from $X$ occurs in $f$.
It is a convenient way to symbolize such a function $f$ in the form 
$f = \prod_{x\in X} x^{f(x)}$ (with the natural relations like $x^ix^j = x^{i+j}$,
$x^i y^j = y^j  x^i$, $x^0 = 1$, $1 x = x$, etc...).
Then the cardinality of $f$ is $|f| = \sum_{x\in X}f(x)$.
We write $\suport(f) = \{ x\in X\colon f(x) > 0 \}$;
clearly, $|f| = \sum_{x\in\suport(f)} f(x)$

Let us write  $\bigcup_{i=0}^{i=k-1} \msub_i(X) =: \msub_{<k}(X)$.
On the set $\msub_k(X) \times \msub_{<k}(X)$ we define the incidence relation $\inc$
by the formula:
\begin{equation}\label{def:inc:ver0}
  e \inc f :\iff f = e\, x^{k-|e|} \text{ for some } x\in X.
\end{equation}
The structure 
\begin{ctext}
  $\VerSpace(m,k) = \struct{\msub_k(X),\msub_{<k}(X),\inc}$
\end{ctext}
is called a {\em combinatorial Veronesian}; the class of combinatorial Veronesians
was introduced in \cite{combver}. 
It was proved that $\VerSpace(m,k)$ is a partial linear space with the points of rank
$k$ and the lines of size $m$; the formulas counting the cardinality of $\msub_k(X)$ and
of $\msub_{<k}(X)$ are known in the elementary combinatorics; summing up we get that
$\VerSpace(m,k)\in\ginconfx(k,m)$.

Let us fix $a\in X$ and define
${\cal X}_2 = \{f\in\msub_k(X)\colon a\in\suport(f)\}$
and 
${\cal X}_1 = \{f\in\msub_k(X)\colon a\notin\suport(f)\}$;
then $\msub_k(X)$ is the disjoint union ${\cal X}_1 \cup {\cal X}_2$.

\begin{sentences}
\item
  It is seen that the map $\msub_{k-1}(X)\ni f\longmapsto f\,a^1\in {\cal X}_2$
  is a bijection. Suppose that $f' a^1, f'' a^1 \inc e$ where $e\in\msub_{<k}(X)$.
  Then $a\in\suport(e)$ and $f',f''\inc \frac{e}{a}\in\msub_{<k-1}(X)$.
  Finally, $a\in\suport(f)$ for every $f$ with $f\inc e$, which yields that
  ${\cal X}_2$ is a subspace of $\VerSpace(X,k)$; 
  as we noted, it is isomorphic to $\VerSpace(X,k-1)$.
\item
  Let $e\in\msub_{<k}(X)$ be a line of $\VerSpace(m,k)$. If $a \in\suport(e)$ then 
  $f\in{\cal X}_2$ for every $f$ with $f\inc e$.
  If $a\notin\suport(e)$ then $e^\infty = e\,a^{k-|e|}$ is the unique element incident with
  $e$ which belongs to ${\cal X}_2$.
\item\label{jawne2}
  Evidently, the points in ${\cal X}_1$ can be considered as the points
  of $\VerSpace(X\setminus\{a\},k)$.
  Let $e\in\msub_{<k}(X\setminus\{ a \})$ be a line of $\VerSpace(X\setminus\{a\},k)$;
  then $e^\infty = e\,a^{k-|e|}\inc e$ is well defined. %% and $e\in {\cal X}_2$.
\item
  In particular, the above yields that ${\cal X}_2$ is a hyperplane of $\VerSpace(m,k)$.
\end{sentences}
Summing up, we obtain
\begin{prop}
  Let $a\in X$ be arbitrary.
  \begin{equation} 
    \VerSpace(X,k) = \VerSpace(X\setminus\{ a\},k) \rtimes_\infty \VerSpace(X,k-1),
  \end{equation}  
  where $\infty$ is defined by \eqref{jawne2} above.
\end{prop}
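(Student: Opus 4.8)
The plan is to obtain the asserted decomposition as a direct instance of Theorem~\ref{thm:decompo0}, applied to ${\goth K} = \VerSpace(X,k)$ with the distinguished hyperplane ${\cal H} = {\cal X}_2$. First I would record the data required to invoke that theorem. Since $\VerSpace(X,k)\in\ginconfx(k,m)$ with $m = |X|$, the points of $\goth K$ have rank $k$ and its lines have size $m$, so $\goth K$ is a binomial configuration of the required kind; and the last of the four observations preceding the proposition states that ${\cal X}_2$ is a hyperplane of $\goth K$. Hence the theorem is applicable to this choice of $\cal H$, and the whole task reduces to checking its two hypotheses.

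For hypothesis~\eqref{war1} I would invoke the first observation above: the map $\msub_{k-1}(X)\ni f\longmapsto f\,a^1\in{\cal X}_2$ is a bijection carrying $\VerSpace(X,k-1)$ onto ${\goth K}\restriction{{\cal X}_2}$, so that ${\goth K}_2 = {\goth K}\restriction{{\cal X}_2}\cong\VerSpace(X,k-1)\in\ginconfx(k-1,m)$; in particular ${\goth K}_2$ has constant point rank $k-1$, which is exactly the requirement that $\cal H$ be a configuration. For hypothesis~\eqref{war2} I would use the third observation, \eqref{jawne2}: a point $f\in{\cal X}_1$ is literally a multiset in $\msub_k(X\setminus\{a\})$, and likewise a line of $\goth K$ not contained in ${\cal X}_2$ is a multiset $e$ with $a\notin\suport(e)$, i.e.\ an element of $\msub_{<k}(X\setminus\{a\})$; therefore ${\goth K}_1 = {\goth K}\setminus{\cal X}_2$ coincides, on points and lines, with $\VerSpace(X\setminus\{a\},k)\in\ginconfx(k,m-1)$, a binomial configuration.

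With both hypotheses verified, Theorem~\ref{thm:decompo0} supplies at once a $1$--$1$ correspondence $\infty$ from the lines of ${\goth K}_1$ to the points of ${\goth K}_2$ with ${\goth K} = {\goth K}_1\rtimes_\infty{\goth K}_2$, that is, $\VerSpace(X,k)=\VerSpace(X\setminus\{a\},k)\rtimes_\infty\VerSpace(X,k-1)$ under the identifications just made. The only remaining point is to confirm that this abstract $\infty$ is the explicit map of \eqref{jawne2}; but the theorem's $\infty$ sends a line $A$ of ${\goth K}_1$ to the unique point of ${\cal X}_2$ incident with $A$, and by the second observation above, for a line $e$ with $a\notin\suport(e)$ that unique point is $e\,a^{k-|e|}$, which is precisely $e^\infty$ as defined in \eqref{jawne2}. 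I do not expect a genuine obstacle: the four observations have already done the structural work—that ${\cal X}_2$ is a hyperplane, that the reduct and restriction are the two Veronesians in question, and the formula for $\infty$—so the proof is merely a matter of assembling them. The one place deserving a second glance is the bookkeeping in this last step, namely that the identification of ${\goth K}_1$ with $\VerSpace(X\setminus\{a\},k)$ is an honest equality (so $\infty$ transports without adjustment), whereas the identification of ${\goth K}_2$ with $\VerSpace(X,k-1)$ is only up to the bijection $f\mapsto f\,a^1$.
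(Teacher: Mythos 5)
Your proposal is correct and follows essentially the same route as the paper: the paper's ``proof'' consists precisely of the four preliminary observations (hyperplane, identification of the restriction with $\VerSpace(X,k-1)$ via $f\mapsto f\,a^1$, identification of the reduct with $\VerSpace(X\setminus\{a\},k)$, and the explicit map \eqref{jawne2}), assembled via the general decomposition machinery of Theorem~\ref{thm:decompo0}, exactly as you do. Your closing remark on the bookkeeping --- that the reduct is an honest equality while the restriction is only identified up to the bijection $f\mapsto f\,a^1$ --- is a point the paper glosses over, and it is correctly resolved.
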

In (numerical) symbols we can express this fact by
$$
\VerSpace(m,k) = \VerSpace(m-1,k)\rtimes_\infty \VerSpace(m,k-1).
$$

%% the multiset $e$, and thus we can identify the points of $\dual{\VerSpace(X,k)}$
%%   with the elements of the set \quad 

As a consequence of \cite[Cor. 4.8, Thm. 4.5]{combver}, $\VerSpace(m,k)$
is a combinatorial Grassmannian only for $k=2$ or $m=2$ so,
Grassmannians and Veronesians are essentially distinct families.

%%%%%%%%%%%%%%%%%%%%%%%%%%%%%%%%%%%%%%%%%%%%%%%%%%%%%%%%%%%%%%%%%%%%%%%%%%%%%%%%%%%%
%%%%%%%%%%%%%%%%%%%%%%%%%%%%%%%%%%%%%%%%%%%%%%%%%%%%%%%%%%%%%%% example: daulveron
%%%%%%%%%%%%%%%%%%%%%%%%%%%%%%%%%%%%%%%%%%%%%%%%%%%%%%%%%%%%%%%%%%%%%%%%%%%%%%%%%%%%
\subsection{Example: the family of dual combinatorial Veronesians}\label{exm:duvery}

In Subsections \ref{exm:grasy} and \ref{exm:very},
we have found decompositions of the scheme
  $\ginconfx(k,m) = \ginconfx(k,m-1)\rtimes\ginconfx(k-1,m)$. Clearly, 
  $\ginconfx(m,k)$ are dual to $\ginconfx(k,m)$; 
therefore, in view of \ref{cor:dual-hipy} one can expect that each of these
decompositions determines a decomposition of the scheme
\begin{ctext}
  $\ginconfx(m,k) =\dual{\ginconfx(k,m)} = 
  \dual{\ginconfx(k-1,m)} \rtimes \dual{\ginconfx(k,m-1)} =
  \ginconfx(m,k-1)\rtimes \ginconfx(m-1,k)$
\end{ctext}
In case of combinatorial Grassmannians the dualization procedure does not yield any 
new family of configurations:
\begin{fact}
  Let $n = |X|$ for a set $X$. Then 
  $\dual{\GrasSpace(X,k)} \cong \GrasSpace(X,n-k)$.
\end{fact}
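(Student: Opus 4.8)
The plan is to read the isomorphism off directly from the explicit combinatorial description of the Grassmannian, using boolean complementation $\varkappa\colon Y\longmapsto X\setminus Y$ as the underlying bijection, exactly in the spirit of Subsection~\ref{exm:grasy}. Recall that $\GrasSpace(X,k) = \struct{\sub_{k}(X),\sub_{k+1}(X),\subset}$, so its points are the $k$-subsets and its lines are the $(k+1)$-subsets of $X$, incidence being inclusion. Passing to the dual merely interchanges these two r\^oles: the points of $\dual{\GrasSpace(X,k)}$ are the $(k+1)$-subsets, its lines are the $k$-subsets, and a point $A$ is incident with a line $a$ precisely when $a\subset A$. A direct complementation argument is cleaner here than an inductive one built from the decomposition via \ref{cor:dual-hipy}.

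The key observation is that $\varkappa$ is an involution, a bijection on each cardinality class, and inclusion-reversing: $a\subset A$ if and only if $\varkappa(A)\subset\varkappa(a)$. Hence, applying $\varkappa$ separately to the point set and to the line set of $\dual{\GrasSpace(X,k)}$ converts the reversed containment of the dual incidence back into ordinary containment, and what results is again a structure of the form $\struct{\sub_{j}(X),\sub_{j+1}(X),\subset}$, i.e.\ a combinatorial Grassmannian on $X$. Thus the whole proof reduces to checking that $\varkappa$ carries the $(k+1)$-subsets and the $k$-subsets onto the point- and line-cardinality classes of the Grassmannian on the right, and that incidence is respected; the latter is immediate from inclusion-reversal.

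The only genuine point to watch is the bookkeeping of indices, which I expect to be the main (and essentially the sole) obstacle. Since $|X|=n$, complementation sends $\sub_{k+1}(X)$ onto $\sub_{n-k-1}(X)$ and $\sub_{k}(X)$ onto $\sub_{n-k}(X)$, and one must verify that these are exactly the points and lines of the target Grassmannian, with inclusion as incidence. This is where I would cross-check the parameters against \eqref{equ:pls} and the identity $\GrasSpace(X,k)\in\ginconfx(n-k,k+1)$: the dual $\dual{\GrasSpace(X,k)}$ has point rank $k+1$ and line size $n-k$, and the computation must be confirmed to land on a Grassmannian with matching rank and line size. Beyond this indexing check I anticipate no conceptual difficulty, since the content of the statement is entirely the inclusion-reversing nature of complementation and everything else is elementary counting.
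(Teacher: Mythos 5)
Your complementation strategy is the natural one (the paper states this Fact without any proof, and complementation over $X$ is surely the intended argument), but the step you set aside as ``index bookkeeping'' is precisely where the proof fails to close, and it cannot be completed for the statement as quoted. By your own computation, $\varkappa$ carries the points of $\dual{\GrasSpace(X,k)}$ (the $(k+1)$-subsets of $X$) onto $\sub_{n-k-1}(X)$ and its lines (the $k$-subsets) onto $\sub_{n-k}(X)$, turning the reversed incidence back into $\subset$. Under the paper's own convention $\GrasSpace(X,j)=\struct{\sub_j(X),\sub_{j+1}(X),\subset}$, the structure you arrive at, namely $\struct{\sub_{n-k-1}(X),\sub_{n-k}(X),\subset}$, is $\GrasSpace(X,n-k-1)$, \emph{not} $\GrasSpace(X,n-k)$. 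The parameter cross-check you postpone would have exposed the mismatch immediately: $\dual{\GrasSpace(X,k)}\in\ginconfx(k+1,n-k)$ has $\binom{n}{k+1}$ points, while $\GrasSpace(X,n-k)\in\ginconfx(k,n-k+1)$ has $\binom{n}{k}$ points, so unless $n=2k+1$ the two are not even configurations of the same type. A concrete test: for $|X|=5$ and $k=2$, $\GrasSpace(X,2)$ is the Desargues configuration, which is self-dual; this agrees with $\dual{\GrasSpace(X,2)}\cong\GrasSpace(X,5-2-1)=\GrasSpace(X,2)$, whereas $\GrasSpace(X,3)$ is a $\konftyp(10,2,5,4)$-configuration, certainly not isomorphic to the dual of Desargues.

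In short, the missing step is not routine, because it is false for the stated target: the deferred verification is the whole content of the Fact, and performing it shows the statement needs an index shift. Your argument, carried to the end, is a correct and complete proof of $\dual{\GrasSpace(X,k)}\cong\GrasSpace(X,n-(k+1))$ --- which is the natural formula, since the points of the dual are $(k+1)$-subsets and complementation replaces cardinality $k+1$ by $n-(k+1)$. To repair the write-up, actually carry out the verification, record that the image of $\dual{\GrasSpace(X,k)}$ under $(\varkappa,\varkappa)$ is $\GrasSpace(X,n-k-1)$, and flag the off-by-one discrepancy with the Fact as printed, rather than asserting that the indices will be confirmed to match; as it stands, your text claims to prove a statement that your own construction contradicts.
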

However, the dual Veronesians yield another, third family: if $\dual{\VerSpace(m,k)}$
is (isomorphic to) a combinatorial Grassmannian then either $k=2$ or $m=2$;
if it is isomorphic to a combinatorial Veronesian then $k=2$, or $m=2$, or $k=3=m$.
Even $\dual{\VerSpace(k,k)} \cong \VerSpace(k,k)$ is not valid for $k > 3$
(see \cite[Thm.'s 4.14, 4.15]{combver})!

Let us adopt notation of Subsection \ref{exm:very} and let
${\goth K} = \struct{U,\lines} = \VerSpace(X,k)$; let us remind that 
${\cal X}_2 = \{ f\in\msub_k(X)\colon a \in \suport(f) \}$ is a hyperplane of $\goth K$
and then 
$\lines[{{\cal X}_2}] = \{ e\in \msub_{<k}(X)\colon a \in\suport(e) \} =:\lines_2$.
Consequently, $\lines_1 := \lines\setminus\lines_2 = \msub_{<k}(X\setminus\{ a\})$ is
a hyperplane of $\dual{\goth K}$; set
${\cal X}_1 := U\setminus{\cal X}_2 = \msub_k(X\setminus\{ a \})$.
Consider a line $f\in\msub_{k}(X)$ of $\struct{\lines_2,{\cal X}_2}$;
then $a \in \suport(f)$: let $dg(a,f)$ be the greatest integer $s$ such that $f=a^s g$
for a multiset $g$.
We associate with such an $f$ the point $f^\infty = \frac{f}{a^{dg(a,f)}}\in\lines_1$,
it is seen that we obtain
\begin{prop}
$\dual{\VerSpace(m,k)} = 
\struct{\lines_2,{\cal X}_2,\inc^{-1}} \rtimes_{\infty}\struct{\lines_1,{\cal X}_1,\inc^{-1}}
\cong \dual{\VerSpace(m,k-1)} \rtimes_{\infty} \dual{\VerSpace(m-1,k)}$.
\end{prop}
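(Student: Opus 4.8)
The plan is to read the statement straight off Corollary~\ref{cor:dual-hipy}, applied to the Veronesian decomposition already obtained in Subsection~\ref{exm:very}; essentially no new geometry is needed, and all the work reduces to confirming the hypothesis of \ref{cor:dual-hipy} and computing one inverse map explicitly. Recall that, having fixed $a\in X$, we have written ${\goth K}=\VerSpace(X,k)$ as ${\goth K}_1\rtimes_\infty{\goth K}_2$ with ${\goth K}_1=\VerSpace(X\setminus\{a\},k)$ carried on the point set ${\cal X}_1$, with ${\goth K}_2\cong\VerSpace(X,k-1)$ carried on ${\cal X}_2$, and with attaching map $e\longmapsto e\,a^{k-|e|}$ of \eqref{jawne2}.

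First I would check that this $\infty$ is a bijection, which is exactly what licenses the use of \ref{cor:dual-hipy}. Since $\VerSpace(X,k)$ meets the hypotheses of Theorem~\ref{thm:decompo0} --- condition \ref{war1} holds because the restriction to ${\cal X}_2$ is again a Veronesian and so has constant point rank, and \ref{war2} holds because the reduct is $\VerSpace(X\setminus\{a\},k)$ --- item \ref{war5} of that theorem already gives that $\infty$ is a $1$--$1$ correspondence. Corollary~\ref{cor:dual-hipy} then yields at once
\begin{equation*}
  \dual{\goth K}=\dual{{\goth K}_2}\rtimes_{\infty^{-1}}\dual{{\goth K}_1};
\end{equation*}
unwinding $\dual{\struct{U,\lines,\inc}}=\struct{\lines,U,\inc^{-1}}$ identifies $\dual{{\goth K}_2}=\struct{\lines_2,{\cal X}_2,\inc^{-1}}$ and $\dual{{\goth K}_1}=\struct{\lines_1,{\cal X}_1,\inc^{-1}}$, which is the first, literal equality in the statement.

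The one computation carrying genuine content --- and the only step I expect to be an obstacle --- is matching the attaching map $\infty^{-1}$ of the dual gluing with the map $f\longmapsto f/a^{dg(a,f)}$ fixed just before the proposition. A line of $\dual{{\goth K}_2}$ is a point $f\in{\cal X}_2$ of $\VerSpace(X,k)$, so $a\in\suport(f)$ and $dg(a,f)\geq 1$; writing $f=a^{dg(a,f)}h$ with $h$ free of $a$, I must check that $h=\infty^{-1}(f)$ and that $h\in\lines_1=\msub_{<k}(X\setminus\{a\})$. Both follow from the bookkeeping $k=|f|=dg(a,f)+|h|$: it forces the exponent of $a$ in $f$ to be $k-|h|$, so $f=h\,a^{k-|h|}=\infty(h)$, whence $\infty^{-1}(f)=h=f/a^{dg(a,f)}$, and $|h|=k-dg(a,f)<k$ places $h$ in $\lines_1$; thus the map denoted $\infty$ in the statement is precisely this $\infty^{-1}$. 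Finally, passing to numerical types through ${\goth K}_2\cong\VerSpace(m,k-1)$ and ${\goth K}_1\cong\VerSpace(m-1,k)$ (with $|X|=m$, $|X\setminus\{a\}|=m-1$) turns the displayed equality into $\dual{\VerSpace(m,k)}\cong\dual{\VerSpace(m,k-1)}\rtimes_{\infty}\dual{\VerSpace(m-1,k)}$, as claimed.
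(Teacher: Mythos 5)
Your proof is correct and takes essentially the same route as the paper: there, too, the proposition is obtained by dualizing (via Proposition \ref{prop:dual-hypy}/Corollary \ref{cor:dual-hipy}) the decomposition of $\VerSpace(X,k)$ from Subsection \ref{exm:very}, with the attaching map $f\longmapsto f/a^{dg(a,f)}$ recognized as the inverse of $e\longmapsto e\,a^{k-|e|}$. Your explicit check of bijectivity through Theorem \ref{thm:decompo0} and the computation of $\infty^{-1}$ simply spell out what the paper compresses into ``it is seen that we obtain''.
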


With the symbols $\VerSpacex(m,k) = \dual{\VerSpace(m,k)}\in\ginconfx(m,k)$ 
we arrive to
$$
\VerSpacex(m,k) = \VerSpacex(m,k-1) \rtimes_\infty \VerSpacex(m-1,k)
$$
Consequently, following \ref{cor:dual-hipy} we can explicitly characterize the Pascal 
Triangle of Configurations consisting of dual combinatorial Veronesians.
%

%%%%%%%%%%%%%%%%%%%%%%%%%%%%%%%%%%%%%%%%%%%%%%%%%%%%%%%%%%%%%%%%%%%%%
%%%%%%%%%%%%%%%%%%%%%%%%%%%%%%%%%%%%%%%%%%%%%%%%%%%%%%%%%%%%%%%%%%%%%
%%%%%%%%%%%%%%%%%%%%%%%%%%%%%%%%%%%%%%%%%%%%%%%%%%%%%% comments: and mieso
%%%%%%%%%%%%%%%%%%%%%%%%%%%%%%%%%%%%%%%%%%%%%%%%%%%%%%%%%%%%%%%%%%%%%
%%%%%%%%%%%%%%%%%%%%%%%%%%%%%%%%%%%%%%%%%%%%%%%%%%%%%%%%%%%%%%%%%%%%%

\section{Comments and problems}

We have shown three families $\mathscr K$ of configurations 
${\goth K}(m,k)\colon m,k=1, 2\ldots$ such that the formula 
${\goth K}(m,k) = {\goth K}(m,k-1)\rtimes_{\infty_{m,k}}{\goth K}(m-1,k)$
is valid for all $m,k$ and suitable maps $\infty_{m,k}$.
One can expect that there are more such families: the point is to find a
suitable family 
\begin{ctext}
  $\big[\infty_{m,k}\colon\sub_{m-1}(m+k-2)\longrightarrow\sub_{k-1}(m+k-2)\colon %
  m,k=1,2,\ldots \big]$
\end{ctext}
It is seen how huge variety of binomial partial triple systems can be obtained
via `completing' complete graphs (see \cite{hypinbin:psts}): 
one can expect that our procedure produces
much more required configurations (cf. Problem \ref{prob:rozmnoz}).

However, one essential question appears: which of them can be realized
in a Desarguesian projective space: we call them {\em projective} then. 
It is known that all the combinatorial Grassmannians
are projective. It is also known that (practically all) combinatorial Veronesians are not
projective (only $\VerSpace(3,3)$ and $\VerSpace(2,k)$, $\VerSpace(m,2)$ are realizable).
Similarly, dual of combinatorial Veronesians are also not projective (besides
the exceptions indicated before), \cite[Thm.'s 6.9, 6.10]{combver}.

The statement like 
{\em if ${\goth K}_1$ and ${\goth K}_2$ are realizable then 
${\goth K}_1\rtimes{\goth K}_2$, if it is a (binomial) configuration then is realizable as well}
is false, in general. It suffices to present $\VerSpace(4,3)$ as the ``sum''
of projectively realizable structures $\VerSpace(3,3)$ and $\VerSpace(4,2)$.
So, a natural question arises
\begin{prob}\label{prob:rozmnoz}
  Assume that ${\goth K}_1$ and ${\goth K}_2$ are projective (binomial) configurations
  which satisfy corresponding `recursive equation'
  \begin{equation}
    {\goth K}_1\in\ginconfx(k,m-1) \text{ and } {\goth K}_2\in\ginconfx(k-1,m)
    \text{ for some } k,m\geq 2.
  \end{equation}
  Then there is a bijection 
  $\infty\colon\text{ lines of }{\goth K}_1\longrightarrow \text{ points of }{\goth K}_2$
  so as 
  ${\goth K}_1 \rtimes_\infty{\goth K}_2 \in\ginconfx(k,m)$.
  This observation enables us to construct `Pascal Triangle of Configurations' from, practically,
  arbitrary boundary sequences of configurations, considering arbitrary $\infty$'s.
  \par
  For which maps $\infty$ (is there
  necessarily at least one) the structure ${\goth K}_1\rtimes_\infty{\goth K}_2$
  is projective?\myend
\end{prob}
Note that ``boundary'' sequences $\ginconfx(2,k)$ and $\ginconfx(k,2)$ are known:
$\ginconfx(2,k) = \{ \dual{K_{k+1}} \}$ and  
$\ginconfx(k,2) = \{ K_{k+1} \}$,
and these two sequences consist of projective configurations.

So, considering configurations decomposed with the following schemes
\begin{ctext}
  $\ginconfx(3,k) = \ginconfx(3,k-1)\rtimes \ginconfx(2,k) = 
  \ginconfx(3,k-1)\rtimes \dual{K_{k+1}}$,
  $\ginconfx(k,3) = \ginconfx(k,2)\rtimes \ginconfx(k-1,3) = 
  K_{k+1} \rtimes \ginconfx(k-1,3)$.
\end{ctext}
the real problem lies in the classification/choice of bijections $\infty$!

In particular, there are known binomial partial Steiner triple systems not in the
families $\VerSpace(?,?)$ nor among $\GrasSpace(?,?)$, and nor among $\dual{\VerSpace(?,?)}$
which are projective,
for example, so called quasi-Grassmannians of \cite{skewgras}.
Each such structure $\vergras_n$ has parameters as the corresponding $\GrasSpace(n,2)$.
So, there arises a very particular, but intriguing
\begin{prob}
  Is there a map $\infty$ such that the structure 
  $\vergras_{n-1}\rtimes_\infty\GrasSpace(n-1,3)$ (which has the parameters of 
  $\GrasSpace(n,3)$) is realizable in a Desarguesian projective space.\myend
\end{prob}
\section*{Addendum}
The paper is a result of discussions during Combinatorics 2018 in Arco.

%%%%%%%%%%%%%%%%%%%%%%%%%%%%%%%%%%%%%%%%%%%%%%%%%%%%%%%%%%%%%%%%%%%%%%%%%%%%%%%%%%%%
%%%%%%%%%%%%%%%%%%%%%%%%%%%%%%%%%%%%%%%%%%%%%%%%%%%%%%%%%%%%%%% biblio
%%%%%%%%%%%%%%%%%%%%%%%%%%%%%%%%%%%%%%%%%%%%%%%%%%%%%%%%%%%%%%%%%%%%%%%%%%%%%%%%%%%%

\bigskip
\begin{small}
\noindent
Authors' address:
\\
Krzysztof Pra{\.z}mowski,
\\
Institute of Mathematics, University of Bia{\l}ystok
\\
K. Cio{\l}kowskiego 1M, 15-245 Bia{\l}ystok, Poland
\\
e-mail: \verb+krzypraz@math.uwb.edu.pl+,
\end{small}

\end{document}